\documentclass{article}
\usepackage{mathrsfs}
\usepackage{amsfonts}
\usepackage{amssymb,amsmath}
\usepackage{amsthm}
\usepackage[dvips]{graphics}
\usepackage{ae}
\usepackage{bm}
\usepackage{graphicx}
\usepackage{cite}
\usepackage{lineno}
\usepackage{enumitem}
\usepackage{amscd,epsfig,latexsym,mathabx,mathtools,pstricks}

\def\dim{\mbox{\rm dim}}

\def\c{\mbox{\rm c}}
\def\d{\mbox{\rm d}}

\def\And{\mbox{\rm ~and~}}

\def\If{\mbox{\rm ~if~}}
\def\For{\mbox{\rm ~for~}}

\def\rk{\mathrm{rk}}

\def\D{\mbox{\rm D}}

\def\({\mbox{\rm (}}\def\){\mbox{\rm )}}

\def\sp{\hspace{1ex}}
\def\sp{\hspace{0.3cm}}

\newtheorem{theorem}{Theorem}[section]
\newtheorem{proposition}[theorem]{Proposition}
\newtheorem{corollary}[theorem]{Corollary}

\linespread{1.1}

\setlength{\textheight}{230mm}

\addtolength{\hoffset}{-10mm} \addtolength{\voffset}{-15mm}
\addtolength{\textwidth}{20mm}
\begin{document}
%\small
\normalsize
%\large
%\huge
\title{\huge \textbf{Bounds on Characteristic Polynomials}}

\author{Suijie Wang\\
\small Institute of Mathematics, Hunan University, China\\
\small wangsuijie@hnu.edu.cn\\ \\
Yeong-Nan Yeh\\
\small Institute of Mathematics, Academia Sinica, Taiwan\\
\small mayeh@math.sinica.edu.tw\\ \\
Fengwei Zhou\\
\small Department of Mathematics, HKUST, Hong Kong\\
\small fzhou@connect.ust.hk}
\date{}
\maketitle
%\linenumbers

\begin{abstract}
Suppose $G$ is a simple graph with $n$ vertices, $m$ edges, and rank
$r$. Let $\chi_G(t)=a_0t^n-a_1t^{n-1}+\cdots +(-1)^ra_rt^{n-r}$ be
the chromatic polynomial of $G$. For $q,k\in \Bbb{Z}$ and $0\le k\le
q+r+1$, we obtain a sharp two-side bound for the partial binomial sum of the coefficient sequence, that is,
\[
{r+q\choose k}\le \sum_{i=0}^{k}{q\choose k-i}a_{i}\le {m+q\choose
k}.
\]
Indeed, this bound holds for the characteristic polynomial of hyperplane arrangements and matroids, and its weak version can be generalized to the characteristic polynomial of toric arrangements and arithmetic matroids. We also propose a problem on the geometric interpretation of the above bound.

\noindent{\bf Keywords:} Chromatic polynomials, characteristic
polynomials, hyperplane arrangements, toric arrangements, arithmetic matroids.
\end{abstract}

\section{Introduction}
We start with some notations in graph theory. Let $G=(VG, EG)$ be a
simple graph (no loops and multi-edges) with the vertex set $VG$ and
the edge set $EG$. Let $n=|VG|$, $m=|EG|$, and $c$ be the number of
connected components of $G$. Then the rank of $G$ is $r=n-c$. First
appeared in \cite{Birkhoff}, the chromatic polynomial $\chi_G(t)$
counts the number of proper colorings of the graph $G$ with $t$
colors, which can be written as follows,
\[\chi_G(t)=a_0t^n-a_1t^{n-1}+\cdots +(-1)^ra_rt^{n-r}.\]

The chromatic polynomial is one of the most central topics in graph
theory, whose coefficients are mysterious and have caught many
mathematicians' interests. In 1932, Whitney \cite{Whitney2} showed
that the coefficient sequence is sign-alternating, i.e., $a_i>0$.
Moreover, he \cite{Whitney1} gave a combinatorial interpretation to
each coefficient $a_i$, which is equal to the number of those
$i$-subsets of $EG$ that contain no broken circuits, known as the
\emph{no broken circuit} theorem. In 1968, Read \cite{Read} asked
which polynomial is the chromatic polynomial of some graph and
conjectured that the sequence $a_0,a_1,\ldots,a_r$ is unimodal. In
1970, G.H.J. Meredith \cite{Meredith} gave an upper bound for each
coefficient, which is $|a_i|\le {m\choose i}$. In 1974, Dowling and
Wilf \cite{Dowling-Wilson} gave a lower bound for each coefficient
by considering the finite geometric lattices, i.e., $a_i\ge
(m-r){r-1\choose i-1}+{r\choose i}$. For $1\le i\le r$, let the
integral sequence $n_i, n_{i-1}, \ldots$ be obtained from $a_i$ via
the formula $a_i={n_i\choose i}+{n_{i-1}\choose i-1}+\cdots$, and
denote $a_i^{(i/j)}={n_i\choose j}+{n_{i-1}\choose j-1}+\cdots$. In
1976, Wilf \cite{Wilf} obtained that $a_j\ge a_i^{(i/j)}$ for $i>j$.
In 2012, Huh \cite{Huh} gave a positive answer to Read's conjecture
by showing that the coefficient sequence $a_0,a_1,\ldots,a_r$ is
logconcave.

All above results are relatively big steps in the way investigating
the properties of coefficients of the chromatic polynomial of graphs
and little else is known. In this paper, we shall introduce a new
result on the coefficient sequence which will imply Whitney's
sign-alternating result, Meredith's upper bound result, and Dowling
and Wilson's lower bound result.

Next is the statement of our main result. If $q,k\in \Bbb{Z}$ with
$0\le k\le q+r+1$, we have
\begin{equation}\label{main}
{r+q\choose k}\le \sum_{i=0}^{k}{q\choose k-i}a_{i}\le {m+q\choose
k}.
\end{equation}
The most interesting part of above inequalities is when $q$ is a
non-positive integer. Recall the generalized binomial coefficient is
defined by, for $\alpha\in \Bbb{C}$ and $k\in \Bbb{Z}_{\ge 0}$,
\[
{\alpha\choose k}=\frac{1}{k!}\alpha(\alpha-1)\cdots(\alpha-k+1).
\]
Then ${\alpha\choose 0}=1, {0\choose k}=0$, and ${-1\choose
k}=(-1)^{k}$.

When $q=0$, we have
\[
{r\choose k}\le a_{k}\le {m\choose k},\sp \For 0\le k\le r.
\]
It means that Whitney's sign-alternating theorem and Meredith's
upper bound theorem are direct consequences.

If $q=-1$, we have
\[
{r-1\choose k}\le (-1)^k\sum_{i=0}^{k}(-1)^ia_{i}\le {m-1\choose k},
\sp\For 0\le k\le r.
\]
So the first $r-1$ partial sums of the coefficient sequence of the
chromatic polynomial are still sign-alternating.

Taking $q=k-r-1$, we have
\[
(-1)^k\sum_{i=0}^{k}{r-i\choose k-i}(-1)^ia_{i}\ge 0,\sp~\For~ 0\le
k \le r.
\]
Later we shall see that the above inequality implies Dowling and
Wilson's result on the lower bound of the coefficients.

Indeed, all above results hold for a more general object, the
characteristic polynomial of hyperplane arrangements and matroids. Hence in section 2, we shall practise the proof of our main result (\ref{main}) on hyperplane arrangements. In section 3, we will generalize the same result to matroids and give a weak version for arithmetic matroids. Finally, we propose an open problem attempting to give a geometric interpretation of our bounds (\ref{main}) for hyperplane arrangements.

\section{Hyperplane Arrangements}
An $n$-dimensional arrangement $\mathcal{A}$ of hyperplanes is a
finite collection of codimension one subspaces in an $n$-dimensional
vector space $V$. Equipped with the partial order defined by the
inverse of set inclusion, the set of all nonempty intersections of
hyperplanes in $\mathcal{A}$ including the ambient space
$V:=\cap_{H\in \emptyset} H$ forms a semi-lattice $L(\mathcal{A})$,
called the intersection semi-lattice, i.e.,
\[
L(\mathcal{A})=\{\cap_{H\in \mathcal{B}}H\mid \mathcal{B}\subseteq
\mathcal{A}\},
\]
Note that the minimal element of $L(\mathcal{A})$ is $V$. The
maximal rank of the semi-lattice $L(\mathcal{A})$ is called the
\emph{rank} of hyperplane arrangement $\mathcal{A}$, denoted by
$r(\mathcal{A})$. In another viewpoint, the rank $r(\mathcal{A})$ is
the dimension of the vector space spanned by those normal vectors of
hyperplanes in $\mathcal{A}$.  The \emph{characteristic polynomial}
$\chi(\mathcal{A};t)\in \Bbb{C}[t]$ of $\mathcal{A}$ is defined to
be
\[
\chi(\mathcal{A};t):=\sum_{X\in
L(\mathcal{A})}\mu(\hat{0},X)\,t^{{\small\dim(X)}}.
\]
where $\mu$ is the {\emph M\"{o}bius function} of $L(\mathcal{A})$.
Let $G=(VG,EG)$ be a simple graph with the vertex set $VG=[n]$ and
the edge set $EG\subseteq [n]\times [n]$. The \emph{graphic
arrangement} $\mathcal{A}_G$ of $G$ is an $n$-dimensional
arrangement of $|EG|$ hyperplanes whose members are given by
\[
H_{ij}: x_i=x_j,\sp \For (i,j)\in EG.
\]
With these definitions, we have, see Theorem 2.7 in
\cite{Stanley-Vol-1},
\[
\chi(\mathcal{A}_G;t)=\chi_G(t).
\]
It follows that the rank of the graph $G$ is indeed the same as the
rank of the graphic arrangement $\mathcal{A}_G$.

It is well known that the characteristic polynomial satisfies the
\emph{deletion-contraction} recurrence
\[\chi(\mathcal{A};t)=\chi(\mathcal{A}\setminus H_0;t)-\chi(\mathcal{A}/H_0;t),\]
where $H_0\in \mathcal{A}$ is a fixed hyperplane,
$\mathcal{A}\setminus H_0$ is an $n$-dimensional subarrangement of
hyperplanes in $V$ obtained by removing $H_0$ from $\mathcal{A}$,
and $\mathcal{A}/H_0$ is an $(n-1)$-dimensional hyperplane
arrangement in $H_0$ whose members are those restrictions of all
hyperplanes of $\mathcal{A}\setminus H_0$ on $H_0$, i.e.,
\[
\mathcal{A}\setminus H_0=\mathcal{A}\setminus\{H_0\}, \sp
\mathcal{A}/H_0=\{H\cap H_0\mid H\in \mathcal{A}\setminus H_0\}.
\]

A hyperplane arrangement is called \emph{central} if $\cap_{H\in
\mathcal{A}}H\neq \emptyset$. We have $r(\mathcal{A})\le
|\mathcal{A}|$ in general and call $\mathcal{A}$ \emph{boolean} when
$r(\mathcal{A})=|\mathcal{A}|$. It is easy to show that the boolean
hyperplane arrangement is central and its intersection semi-lattice
is isomorphic to the boolean lattice $(2^\mathcal{A},\subseteq)$.
Hence the characteristic polynomial of an $n$-dimensional boolean
arrangement $\mathcal{A}$ of $m$ hyperplanes is
\begin{equation}\label{boolean-formula}
\chi(\mathcal{A};t)=t^{n-m}(t-1)^m=\sum_{i=0}^m(-1)^i{m\choose
i}t^{n-i}.
\end{equation}
The graphic arrangement $\mathcal{A}_G$ is boolean if and only if
the graph $G$ is a forest. Note that a subset $\mathcal{B}$ of
$\mathcal{A}$ naturally defines a subarrangement of hyperplanes in
the same ambient space as $\mathcal{A}$, still denoted $\mathcal{B}$
by abuse of notations. A hyperplane arrangement $\mathcal{A}$ of
rank $r$ is called \emph{in general position} if the subarrangement
$\mathcal{B}$ of $\mathcal{A}$ is boolean whenever $|\mathcal{B}|\le
r$, or not central otherwise. If a central hyperplane arrangement is
in general position if and only if it is boolean. From
\cite[Proposition 2.4]{Stanley-Vol-1}, the characteristic polynomial
of an $n$-dimensional arrangement $\mathcal{A}$ of $m$ hyperplanes
in general position is
\begin{equation}\label{formula-general-position}
\chi(\mathcal{A};t)=t^n-mt^{n-1}+{m\choose
2}t^{n-2}+\cdots+(-1)^r{m\choose r}t^{n-r}.
\end{equation}
Later, we shall prove in Proposition
\ref{proposition-general-position} that the converse of the above
statement is still true by using no broken circuit theorem. First we
need some preparations to state no broken circuit theorem. A subset
$\mathcal{B}$ of the hyperplane arrangement $\mathcal{A}$ is called
\emph{dependent} if $\cap_{H\in\mathcal{B}}H\neq \emptyset$ and
$r(\cap_{H\in\mathcal{B}}H)<|\mathcal{B}|$, i.e., the subarrangement
$\mathcal{B}$ is central but not boolean. Let $\mathcal{A}$ be
totally ordered under a given order $\prec$. A subset of
$\mathcal{A}$ is called a \emph{circuit} if it is a minimal
dependent subset of $\mathcal{A}$. It is obvious that each dependent
subset of $\mathcal{A}$ contains at least a circuit. A \emph{broken
circuit} is a subset of $\mathcal{A}$ obtained by removing the
maximal element from a circuit of $\mathcal{A}$. A subset
$\mathcal{B}$ of $\mathcal{A}$ is called \emph{$\chi$-independent}
if $\cap_{H\in\mathcal{B}}H\neq \emptyset$ and $\mathcal{B}$
contains no broken circuits.

\begin{theorem}[{\bf No Broken Circuit Theorem}{\rm \cite[Theorem 3.55]{Orlik}}]\label{theorem-BC}
Let $\mathcal{A}$ be an $n$-dimensional hyperplane arrangement of
rank $r$ and its characteristic polynomial
\[
\chi(\mathcal{A};t)=a_0t^n-a_1t^{n-1}+\cdots +(-1)^ra_rt^{n-r}.
\]
Then for $0\le k\le r$, $a_k$ is equal to the number of
$\chi$-indenpendent $k$-subsets of $\mathcal{A}$.
\end{theorem}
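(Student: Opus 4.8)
The plan is to prove the statement by induction on the number $m=|\mathcal{A}|$ of hyperplanes, running the deletion--contraction recurrence in parallel on the two quantities under comparison: the coefficient $a_k$ and the number $N_k$ of $\chi$-independent $k$-subsets. Throughout I fix the total order $\prec$ and, crucially, single out the \emph{minimal} hyperplane $H_0=\min_\prec\mathcal{A}$ as the one along which to delete and contract. The base case $m=0$ is immediate: then $\mathcal{A}=\emptyset$, $\chi(\mathcal{A};t)=t^n$ so $a_0=1$ and $a_k=0$ for $k>0$, while the empty set is the unique $\chi$-independent subset, giving $N_0=1$ and $N_k=0$ for $k>0$. (As a sanity check, in the boolean case there are no circuits, so every subset is $\chi$-independent and $N_k=\binom{m}{k}=a_k$, matching \eqref{boolean-formula}.)

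First I would record the recurrence on the coefficients. Writing $\chi(\mathcal{A}\setminus H_0;t)=\sum_k(-1)^k a_k' t^{n-k}$ and $\chi(\mathcal{A}/H_0;t)=\sum_k(-1)^k a_k'' t^{(n-1)-k}$ and substituting into the deletion--contraction identity $\chi(\mathcal{A};t)=\chi(\mathcal{A}\setminus H_0;t)-\chi(\mathcal{A}/H_0;t)$, a routine comparison of the coefficient of $t^{n-k}$ yields
\[
a_k(\mathcal{A})=a_k(\mathcal{A}\setminus H_0)+a_{k-1}(\mathcal{A}/H_0).
\]
It therefore suffices to establish the matching combinatorial identity $N_k(\mathcal{A})=N_k(\mathcal{A}\setminus H_0)+N_{k-1}(\mathcal{A}/H_0)$, after which the two inductive hypotheses applied to the $(m-1)$-hyperplane arrangements $\mathcal{A}\setminus H_0$ and $\mathcal{A}/H_0$ close the argument.

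To prove the combinatorial identity I partition the $\chi$-independent $k$-subsets of $\mathcal{A}$ according to whether they contain $H_0$. For those avoiding $H_0$ the point is that, because $H_0$ is minimal, it can never be the maximal element of a circuit; hence a broken circuit of $\mathcal{A}$ fails to contain $H_0$ precisely when it arises from a circuit that avoids $H_0$, that is, from a circuit of $\mathcal{A}\setminus H_0$. Since dependence of a subset $\mathcal{B}\subseteq\mathcal{A}\setminus H_0$ is intrinsic to $\mathcal{B}$, a $k$-subset $\mathcal{B}\subseteq\mathcal{A}\setminus H_0$ contains a broken circuit of $\mathcal{A}$ if and only if it contains a broken circuit of $\mathcal{A}\setminus H_0$, and its centrality is unaffected. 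Thus the $\chi$-independent $k$-subsets of $\mathcal{A}$ not containing $H_0$ are exactly the $\chi$-independent $k$-subsets of $\mathcal{A}\setminus H_0$, which gives the first summand.

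The second summand is where I expect the main difficulty. I would set up the map $\mathcal{B}\mapsto\{H\cap H_0\mid H\in\mathcal{B}\setminus\{H_0\}\}$ sending a $\chi$-independent $k$-subset containing $H_0$ to a subset of $\mathcal{A}/H_0$, endow $\mathcal{A}/H_0$ with the order induced from $\prec$, and aim to show this is a bijection onto the $\chi$-independent $(k-1)$-subsets of $\mathcal{A}/H_0$. The delicate points are: (i) controlling the circuits and broken circuits of the restriction $\mathcal{A}/H_0$ in terms of circuits of $\mathcal{A}$ passing through $H_0$; and (ii) the genuinely arrangement-theoretic subtlety that several hyperplanes of $\mathcal{A}\setminus H_0$ may restrict to one and the same hyperplane of $H_0$, which a priori could destroy injectivity or alter cardinalities. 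The observation that resolves both is that such a coincidence $H_1\cap H_0=H_2\cap H_0$ forces $\{H_0,H_1,H_2\}$ to be dependent, hence to contain a circuit whose broken circuit lies inside any set carrying all three; the no-broken-circuit hypothesis therefore forbids a $\chi$-independent set from containing two hyperplanes with the same restriction together with $H_0$, which is exactly what makes the restriction map injective and size-preserving up to the shift by one coming from $H_0$. Verifying this compatibility under contraction---that $\mathcal{B}$ is $\chi$-independent in $\mathcal{A}$ if and only if its image is $\chi$-independent in $\mathcal{A}/H_0$---is the technical heart of the argument; once it is in place, the bijection identifies the $\chi$-independent $k$-subsets of $\mathcal{A}$ containing $H_0$ with the $\chi$-independent $(k-1)$-subsets of $\mathcal{A}/H_0$, and combining with the deletion count and the coefficient recurrence completes the induction.
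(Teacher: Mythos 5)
First, a point of comparison: the paper does \emph{not} prove this statement at all --- Theorem \ref{theorem-BC} is quoted from Orlik--Terao \cite{Orlik} --- so there is no in-paper proof to measure you against, and your outline must stand on its own. Its overall strategy (induction on $|\mathcal{A}|$ via deletion--contraction, splitting $\chi$-independent $k$-subsets by whether they contain a distinguished hyperplane) is the standard route, and you got a genuinely nontrivial detail right: with the paper's convention that a broken circuit is a circuit minus its \emph{maximal} element, the hyperplane to delete and contract must be the $\prec$-\emph{minimal} one, precisely so that a circuit through $H_0$ never loses $H_0$ when broken. Your deletion half (``$\chi$-independent $k$-sets avoiding $H_0$ are exactly the $\chi$-independent $k$-sets of $\mathcal{A}\setminus H_0$'') is correct as argued.

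The gap is in the contraction half, and it is exactly the step you yourself defer as ``the technical heart.'' Your resolving observation --- that a $\chi$-independent set cannot contain $H_0$ together with two hyperplanes having the same restriction to $H_0$ --- shows only that $\mathcal{B}\mapsto\{H\cap H_0\mid H\in\mathcal{B}\setminus\{H_0\}\}$ sends each $\chi$-independent $k$-set to a set of size exactly $k-1$. It does \emph{not} give injectivity of this map on sets: two different sets, say $\{H_0,H_1\}$ and $\{H_0,H_2\}$ with $H_1\cap H_0=H_2\cap H_0=X$, each contain only one preimage of $X$ yet have the same image, and your observation cannot rule out both being $\chi$-independent. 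The missing lemma (which is true, and is the real content) is stronger: if $\mathcal{B}\ni H_0$ is $\chi$-independent and $H\in\mathcal{B}$ restricts to $X$, then $H$ must be the $\prec$-\emph{largest} hyperplane of $\mathcal{A}$ restricting to $X$; otherwise, choosing any preimage $B\succ H$ of $X$, the triple $\{H_0,H,B\}$ is a circuit (dependent of rank $2$, with all pairs independent) whose maximal element is $B$, so its broken circuit $\{H_0,H\}$ lies inside $\mathcal{B}$. That lemma is what yields set-level injectivity, and it dictates the only possible lift for surjectivity (adjoin $H_0$ to the maximal preimages); but surjectivity then still requires proving that this lift of a $\chi$-independent subset of $\mathcal{A}/H_0$ is $\chi$-independent in $\mathcal{A}$, which in turn forces you to define the ``induced order'' on $\mathcal{A}/H_0$ --- ambiguous when several hyperplanes share one restriction, and never specified in your proposal --- and to relate circuits of $\mathcal{A}/H_0$ to circuits of $\mathcal{A}$ through $H_0$. (A useful simplification you did not invoke: the inductive hypothesis holds for $\mathcal{A}/H_0$ under \emph{any} total order, so you may choose the order there to make this comparison cleanest, e.g.\ ordering restrictions by their maximal preimages; but the choice and the verification still have to be written down.) As it stands, the proposal asserts the crucial equivalence rather than proving it, and the one argument offered toward it is insufficient.
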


\begin{proposition}\label{proposition-general-position}Let $\mathcal{A}$ be an $n$-dimensional arrangement of $m$ hyperplanes and $r(\mathcal{A})=r$. Then $\mathcal{A}$ is in general position if and only if its characteristic polynomial is given by the formula $(\ref{formula-general-position})$.
\end{proposition}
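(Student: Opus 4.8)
The plan is to establish only the converse direction, since the implication that general position yields the characteristic polynomial (\ref{formula-general-position}) is already recorded via \cite[Proposition 2.4]{Stanley2}. So assume that $\chi(\mathcal{A};t)$ coincides with (\ref{formula-general-position}), which means precisely that $a_k={m\choose k}$ for every $0\le k\le r$. The engine of the argument is the No Broken Circuit Theorem (Theorem \ref{theorem-BC}): it says $a_k$ equals the number of $\chi$-independent $k$-subsets of $\mathcal{A}$, and this is at most the total number ${m\choose k}$ of $k$-subsets. Hence the equality $a_k={m\choose k}$ for each $k\le r$ forces \emph{every} $k$-subset of $\mathcal{A}$ with $k\le r$ to be $\chi$-independent, that is, central and containing no broken circuit.

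From this I would first extract the ``small'' part of the definition of general position, namely that every subarrangement $\mathcal{B}$ with $|\mathcal{B}|\le r$ is boolean. Indeed, such a $\mathcal{B}$ is central by the previous step; if it were not boolean we would have $r(\mathcal{B})<|\mathcal{B}|$, so $\mathcal{B}$ would be dependent and therefore contain a circuit $C$. Deleting the maximal element of $C$ produces a broken circuit $C\setminus\{\max C\}\subseteq \mathcal{B}$, contradicting $\chi$-independence. Thus $r(\mathcal{B})=|\mathcal{B}|$, and $\mathcal{B}$ is boolean.

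The remaining and most delicate point is the ``large'' part: no subarrangement $\mathcal{B}$ with $|\mathcal{B}|>r$ is central. Suppose one were. Since $r(\mathcal{B})\le r(\mathcal{A})=r<|\mathcal{B}|$, the set $\mathcal{B}$ is dependent and hence contains a circuit $C$. The key estimate is that any circuit satisfies $r(C)=|C|-1$: minimality makes every proper subset of $C$ boolean, so deleting a single hyperplane already achieves rank $|C|-1$, while $r(C)<|C|$ forces equality. Consequently $|C|-1=r(C)\le r$, so the broken circuit $C\setminus\{\max C\}$ is a subset of the central set $C$, hence central, of size $|C|-1\le r$, and it trivially contains a broken circuit (itself). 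It is therefore not $\chi$-independent, contradicting the first step, which declared every subset of size at most $r$ to be $\chi$-independent. This rules out central subsets of size exceeding $r$ and completes the verification that $\mathcal{A}$ is in general position.

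I expect the circuit-rank computation in the last paragraph to be the crux: one must argue $r(C)=|C|-1$ carefully so that the associated broken circuit has size at most $r$, which is exactly what lets the contradiction land inside the range $0\le k\le r$ governed by the hypothesis and by Theorem \ref{theorem-BC}. The booleanness step and the reduction from equality in the No Broken Circuit bound to universal $\chi$-independence are comparatively routine once the role of broken circuits is set up.
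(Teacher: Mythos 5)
Your proposal is correct, and its backbone is the same as the paper's: invoke the No Broken Circuit Theorem (Theorem \ref{theorem-BC}), note that $a_k={m\choose k}$ forces every $k$-subset with $k\le r$ to be $\chi$-independent, and conclude that all subarrangements of size at most $r$ are boolean. Where you genuinely diverge --- and in fact improve on the paper --- is the case $|\mathcal{B}|>r$. The paper dispatches it in one sentence: ``when $k>r$, we have $a_k=0$, which implies $\cap_{H\in\mathcal{B}}H=\emptyset$.'' Read literally, that is a non sequitur: $a_k=0$ for $k>r$ holds for \emph{every} arrangement of rank $r$ (the characteristic polynomial never has terms of degree below $n-r$), and the vanishing of the number of $\chi$-independent $k$-subsets does not by itself rule out central $k$-subsets, since a central set containing a broken circuit also fails to be $\chi$-independent. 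Your circuit argument supplies exactly the missing step: a central $\mathcal{B}$ with $|\mathcal{B}|>r$ is dependent, hence contains a circuit $C$; the rank computation $r(C)=|C|-1\le r$ shows that the broken circuit $C\setminus\{\max C\}$ is a central set of size at most $r$ that is not $\chi$-independent, contradicting the first step. (A small shortcut: you could avoid computing $r(C)$ by passing to a subset of $\mathcal{B}$ of size exactly $r+1$, whose circuits automatically have size at most $r+1$, so their broken circuits have size at most $r$.) The only other difference is cosmetic: you delegate the forward implication to \cite[Proposition 2.4]{Stanley2}, whereas the paper re-derives it from Theorem \ref{theorem-BC} by observing that an arrangement in general position has no circuits at all; both are legitimate, since formula (\ref{formula-general-position}) is introduced in the paper as precisely that citation.
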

\begin{proof}Note that no subset of $\mathcal{A}$ is a circuit if $\mathcal{A}$ is in general position. Then every $k$-subset of $\mathcal{A}$ is $\chi$-independent for all $k\le r$. By Theorem \ref{theorem-BC}, we have $a_i={m\choose i}$.  Conversely, for $0\le k \le r$, $a_k={m\choose k}$  implies that all $k$-subsets of $\mathcal{A}$ are $\chi$-independent. It follows from the definition that if $\mathcal{B}\subseteq \mathcal{A}$ and $|\mathcal{B}|\le r$, $\mathcal{B}$ contains no broken circuits and $\cap_{H\in\mathcal{B}}H\neq \emptyset$. So the subarrangement $\mathcal{B}$ is boolean if $|\mathcal{B}|\le r$. When $k> r$, we have $a_k=0$,  which implies $\cap_{H\in\mathcal{B}}H=\emptyset$ if $|\mathcal{B}|\ge r$.
\end{proof}

%\begin{proposition}\label{proposition-induc-base}
%An n-dimensional arrangement $\mathcal{A}$ of $m$ hyperplanes is boolean if and only if its intersection semi-lattice $L(\mathcal{A})$ is the boolean lattice $(2^{[m]},\subseteq)$. Hence, the boolean arrangement $\mathcal{A}$ is central and its characteristic polynomial is
%\[
%\chi(\mathcal{A};t)=t^{n-m}(t-1)^m=\sum_{i=0}^m(-1)^i{m\choose i}t^{n-i}.
%\]
%\end{proposition}
%\begin{proof}Let $X$ be a maximal element in the intersection semi-lattice $L(\mathcal{A})$. Suppose that there is a hyperplane $H\in \mathcal{A}$ which does not pass through $X$, i.e., $X\nsubseteqq H$. Then we have $r(\mathcal{A}\setminus H)=r(\mathcal{A})=r(X)$, which contradicts that $\mathcal{A}$ is boolean. It follows that $X\subseteq H$ for all $H\in \mathcal{A}$. Hence, $\mathcal{A}$ is central. Without loss of generality, assume that $\mathcal{A}$ is linear, i.e., all hyperplanes of $\mathcal{A}$ contains origin. Then the hypothesis $r(\mathcal{A}\setminus H)<r(\mathcal{A})$ implies that the normal vectors of all hyperplanes $H\in \mathcal{A}$ are linear independent. It implies that the intersection semi-lattice is the boolean lattice. So the characteristic polynomial can be easily obtained from the m\"{o}bius function of the boolean lattice. The converse statement is obvious.
%\end{proof}

Given $\alpha\in \Bbb{C}$, the formal power series $(1+X)^\alpha =
\sum_{k=0}^\infty {\alpha \choose k} X^k$ satisfies that
$(1+X)^\alpha(1+X)^\beta=(1+X)^{\alpha+\beta}$, then we have
\begin{equation}\label{formula-combinatorial-identity}
\sum_{i=0}^{k}{\alpha\choose i}{\beta\choose
k-i}={\alpha+\beta\choose k},\sp \For \alpha,\beta\in \Bbb{C}, ~~
k\in \Bbb{Z}_{\ge 0}.
\end{equation}

%Indeed, in the case that $x\in \Bbb{C}$ is fixed and $y$ is an arbitrary nonnegative integer, we present a brief proof for $(\ref{formula-combinatorial-identity})$ by induction on $y\in\Bbb{Z}_{\ge 0}$. First, the induction basis $y=0$ is trivial. With the induction hypothesis, we have,\begin{eqnarray*}\sum_{i=0}^{k}{x\choose i}{y+1\choose k-i}&=&\sum_{i=0}^{k}{x\choose i}{y\choose k-i}+\sum_{i=0}^{k-1}{x\choose i}{y\choose k-1-i}\\&=&{x+y\choose k}+{x+y\choose k-1}={x+y+1\choose k},\end{eqnarray*}Notice that $(\ref{formula-combinatorial-identity})$ can be viewed as a polynomial equation in $y$ whenever $x$ is fixed. Then each $y\in \Bbb{Z}_{\ge 0}$ is a root of this polynomial equation. By the fundamental theorem of algebra, $(\ref{formula-combinatorial-identity})$ holds for all $y\in \Bbb{C}$ whenever $x\in \Bbb{C}$ is fixed.

\begin{theorem}\label{theorem-main}
Let $\mathcal{A}$ be an $n$-dimensional arrangement of $m$
hyperplanes and its characteristic polynomial
\[
\chi(\mathcal{A};t)=a_0t^n-a_1t^{n-1}+a_2t^{n-2}+\cdots+(-1)^ra_rt^{n-r},
\]
where $r=r(\mathcal{A})$.
If $q,k\in \Bbb{Z}$ satisfies $0\le k\le
q+r+1$, then
\begin{equation}\label{formula-main}
{r+q\choose k}\le \sum_{i=0}^{k}{q\choose k-i}a_{i}\le {m+q\choose
k}.
\end{equation}
\end{theorem}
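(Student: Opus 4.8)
Write $S_k:=\sum_{i=0}^{k}\binom{q}{k-i}a_i$ for the middle quantity in $(\ref{formula-main})$, and set $m=|\mathcal{A}|$, $r=r(\mathcal{A})$. The plan is to prove the two inequalities of $(\ref{formula-main})$ simultaneously by induction on $m$, using the deletion--contraction recurrence but first recasting it as a recurrence for the sums $S_k$ rather than for the individual coefficients $a_i$.

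The first step is to turn deletion--contraction into a statement about the $S_k$. Fixing $H_0\in\mathcal{A}$ and letting $a_i'$ and $a_i''$ denote the coefficients of $\chi(\mathcal{A}\setminus H_0;t)$ and $\chi(\mathcal{A}/H_0;t)$, one compares the coefficient of $t^{n-i}$ on the two sides of $\chi(\mathcal{A};t)=\chi(\mathcal{A}\setminus H_0;t)-\chi(\mathcal{A}/H_0;t)$; since $\mathcal{A}/H_0$ sits in dimension $n-1$, this yields $a_i=a_i'+a_{i-1}''$. Substituting into the definition of $S_k$ and reindexing the contracted part gives the clean recurrence
\[
S_k(\mathcal{A})=S_k(\mathcal{A}\setminus H_0)+S_{k-1}(\mathcal{A}/H_0),
\]
which is the engine of the whole argument.

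For the base case I would take $\mathcal{A}$ boolean, i.e.\ $m=r$: here $(\ref{boolean-formula})$ gives $a_i=\binom{m}{i}$, so the Vandermonde identity $(\ref{formula-combinatorial-identity})$ yields $S_k=\binom{m+q}{k}=\binom{r+q}{k}$ and both bounds hold with equality. For the inductive step I would assume $m>r$, so that $\mathcal{A}$ is not boolean and hence contains a circuit $\mathcal{C}$; I would delete a hyperplane $H_0\in\mathcal{C}$. Because the normals of a circuit are linearly dependent, the normal of $H_0$ lies in the span of the remaining normals, so $r(\mathcal{A}\setminus H_0)=r$, while contracting the rank-one hyperplane $H_0$ gives $r(\mathcal{A}/H_0)=r-1$. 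Both smaller arrangements use fewer hyperplanes, and this particular choice of $H_0$ is exactly what carries the admissible range $0\le k\le q+r+1$ over to $\mathcal{A}\setminus H_0$ (rank $r$) and to the shifted index $k-1$ of $\mathcal{A}/H_0$ (rank $r-1$). This choice is the one genuinely non-automatic move.

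It then remains to feed the induction hypothesis into the recurrence, and this is where the possibility $q<0$ makes the estimates delicate. For the upper bound, induction gives $S_k(\mathcal{A}\setminus H_0)\le\binom{m-1+q}{k}$ and $S_{k-1}(\mathcal{A}/H_0)\le\binom{m''+q}{k-1}$ with $m''=|\mathcal{A}/H_0|\le m-1$, so Pascal's rule $\binom{m-1+q}{k}+\binom{m-1+q}{k-1}=\binom{m+q}{k}$ reduces the claim to $\binom{m''+q}{k-1}\le\binom{m-1+q}{k-1}$. I expect this to be the main obstacle, since for a negative top entry a binomial coefficient need not be monotone; it works because $\binom{x}{k-1}$ is nondecreasing once $x\ge k-2$, and the hypothesis $k\le q+r+1$ together with $m''\ge r-1$ forces $m''+q\ge k-2$. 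For the lower bound, induction gives $S_k(\mathcal{A}\setminus H_0)\ge\binom{r+q}{k}$ and $S_{k-1}(\mathcal{A}/H_0)\ge\binom{(r-1)+q}{k-1}$, and the recurrence reduces the claim to $\binom{(r-1)+q}{k-1}\ge0$; a short case check (the case $k=0$ being the trivial $S_0=a_0=1$, and $k\ge1$ forcing $q+r\ge0$) shows that the hypothesis $k\le q+r+1$ again excludes the only sign-changing configuration. Assembling the two estimates closes the induction.
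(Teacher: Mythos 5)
Your proposal follows essentially the same route as the paper: induction on $|\mathcal{A}|$ with boolean arrangements as the base case, deletion--contraction with respect to a hyperplane $H_0$ chosen so that $r(\mathcal{A}\setminus H_0)=r$ and $r(\mathcal{A}/H_0)=r-1$, Pascal's rule, and a careful treatment of binomial coefficients whose upper entry may be negative. The recasting of deletion--contraction as the recurrence $S_k(\mathcal{A})=S_k(\mathcal{A}\setminus H_0)+S_{k-1}(\mathcal{A}/H_0)$ is only a cosmetic repackaging of the paper's coefficient identity $a_i=b_i+c_{i-1}$. Your two closing estimates are correct, and in fact your uniform monotonicity argument for the upper bound (that $\binom{x}{k-1}$ is nondecreasing in integers $x\ge k-2$, while $k\le q+r+1$ and $|\mathcal{A}/H_0|\ge r-1$ force $|\mathcal{A}/H_0|+q\ge k-2$) is a bit cleaner than the paper's two-case split on the sign of $|\mathcal{A}/H_0|+q$; the lower-bound case check matches the paper's.

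There is, however, one step that fails as stated: the claim that a non-boolean arrangement contains a circuit, which you use to produce $H_0$. Under the paper's definition, a circuit is a minimal \emph{dependent} subset, and dependence requires centrality ($\cap_{H\in\mathcal{B}}H\neq\emptyset$); since the theorem covers affine (non-central) arrangements, this implication is false in general. For example, take two parallel hyperplanes, say $x_1=0$ and $x_1=1$ in $\Bbb{R}^n$: here $m=2>1=r$, so $\mathcal{A}$ is not boolean, yet every central subset is a singleton, so $\mathcal{A}$ has no dependent subsets and no circuits at all, and your prescription ``delete a hyperplane belonging to a circuit'' cannot be carried out. The conclusion you need is nonetheless true, and your own sentence contains the correct reason: $m>r$ means the $m$ normal vectors span a space of dimension $r<m$, hence are linearly dependent, so some $H_0$ has its normal in the span of the others, and deleting it preserves the rank. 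That is exactly the paper's argument, and it bypasses circuits entirely (alternatively, you could speak of circuits of the linear matroid of normal vectors, but not of circuits in the paper's sense, which are tied to centrality). A second, more minor point: the fact that $r(\mathcal{A}/H_0)=r-1$, which you assert, is itself not completely obvious for affine arrangements; the paper justifies it by noting that every maximal element of $L(\mathcal{A}/H_0)$ is a maximal element of $L(\mathcal{A})$ and that all maximal elements of $L(\mathcal{A})$ have rank $r$. With these repairs your proof is complete and coincides with the paper's.
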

\begin{proof}First if $\mathcal{A}$ is boolean, then $r=m$. From (\ref{boolean-formula}) and (\ref{formula-combinatorial-identity}), we have
\begin{equation}\label{boolean-case}
\sum_{i=0}^{k}{q\choose k-i}a_{i}=\sum_{i=0}^{k}{q\choose
k-i}{m\choose i}={m+q\choose k}={r+q\choose k}.
\end{equation}
So if $\mathcal{A}$ is boolean, (\ref{formula-main}) holds for any
$q,k\in \Bbb{Z}$. In general, we shall use induction on
$|\mathcal{A}|$ to prove (\ref{formula-main}). Note that if
$|\mathcal{A}|=0$ or $1$, $\mathcal{A}$ is a boolean arrangement.
Suppose the result holds for $|\mathcal{A}|\le m$. Since
(\ref{formula-main}) holds for any boolean hyperplane arrangement,
it is enough to prove the result for the case that
$|\mathcal{A}|=m+1$ and $\mathcal{A}$ is not boolean. In this case,
we have $r=r(\mathcal{A})< |\mathcal{A}|=m+1$, that is to say, the
space spanned by the $m+1$ normal vectors of hyperplanes in
$\mathcal{A}$ has dimension $r<m+1$. So at least one of these $m+1$
normals can be removed without changing the spanning space. In
another word, there is a hyperplane $H_0\in \mathcal{A}$ such that
$r(\mathcal{A}\setminus H_0)=r(\mathcal{A})=r$. Then we can write
\[
\chi(\mathcal{A}\setminus
H_0;t)=b_0t^n-b_1t^{n-1}+b_2t^{n-2}+\cdots+(-1)^rb_rt^{n-r}.
\]
Notice that each maximal element in the intersection semi-lattice
$L(\mathcal{A}/H_0)$ is a maximal element of the intersection
semi-lattice $L(\mathcal{A})$. Since all maximal elements of
$L(\mathcal{A})$ have the same rank $r$, it follows that the rank of
$\mathcal{A}/H_0$ is $r-1$, i.e., $r(\mathcal{A}/H_0)=r-1$. Then we
can write
\[
\chi(\mathcal{A}/H_0;t)=c_0t^{n-1}-c_1t^{n-2}+c_2t^{n-3}+
\cdots+(-1)^{r-1}c_{r-1}t^{n-r}.
\]
Since $|\mathcal{A}\setminus H_0|= m$ and $r(\mathcal{A}\setminus
H_0)= r$, the induction hypothesis implies that, if $0\le k\le
q+r+1$,
\begin{equation}\label{deletion-case}
{r+q\choose k}~\le ~\sum_{i=0}^{k}{q\choose
k-i}b_{i}~\le~{m+q\choose k}.
\end{equation}
Since  $|\mathcal{A}/H_0|\le m$ and $r(\mathcal{A}/ H_0)= r-1$, the
induction hypothesis implies that, if $0\le k-1\le q+r$, i.e., $1\le
k\le q+r+1$,
\begin{equation}\label{restriction-case}
{r-1+q\choose k-1}~\le ~\sum_{i=0}^{k-1}{q\choose
k-1-i}c_{i}=\sum_{i=1}^{k}{q\choose k-i}c_{i-1} ~\le~
{|\mathcal{A}/H_0|+q\choose k-1}.
\end{equation}
Using the deletion-contraction recurrence
$\chi(\mathcal{A};t)=\chi(\mathcal{A}\setminus
H_0;t)-\chi(\mathcal{A}/H_0;t)$, we have
\[
a_0=b_0=1,\sp a_i=b_i+c_{i-1} ~\If~ 1\le i\le r.
\]
It then follows by combining with (\ref{deletion-case}) and
(\ref{restriction-case}) that if $1\le k\le r+q+1$,
\begin{equation}\label{combining}
{r+q\choose k}+{r-1+q\choose k-1}~\le ~\sum_{i=0}^{k}{q\choose
k-i}a_{i}~\le~{m+q\choose k}+{|\mathcal{A}/H_0|+q\choose k-1}.
\end{equation}
Since (\ref{formula-main}) is obviously true when $k=0$, it remains
to show that, if $1\le k\le r+q+1$,
\begin{eqnarray}
{r+q\choose k}+{r-1+q\choose k-1}&\ge& {r+q\choose k},\label{inequality-r}\\
{m+q\choose k}+{|\mathcal{A}/H_0|+q\choose k-1}&\le& {m+q+1\choose
k}\label{inequality-m}.
\end{eqnarray}
Note that ${r-1+q\choose k-1}\ge 0$ if $r-1+q\ge 0$. However if
$r-1+q< 0$, then $1\le k \le r+q+1<2$ implies $k=1$ and
${r-1+q\choose k-1}=1\ge 0$. It completes (\ref{inequality-r}).
Since ${m+q\choose k}+{m+q\choose k-1}= {m+q+1\choose k}$ and
$|\mathcal{A}/H_0|\le m$, (\ref{inequality-m}) is obvious when
$|\mathcal{A}/H_0|+q\ge 0$. Now consider the case
$|\mathcal{A}/H_0|+q<0$. Since $r(\mathcal{A}/H_0)=r-1$, then we
have $|\mathcal{A}/H_0|\ge r-1$. Combining with $1\le k\le r+q+1$,
we obtain that $k=1$. So ${|\mathcal{A}/H_0|+q\choose
k-1}={m+q\choose k-1}$ if $|\mathcal{A}/H_0|+q<0$, which completes
(\ref{inequality-m}).
\end{proof}

By taking $q=0$ in (\ref{formula-main}), Whitney's sign-alternating
theorem and Meredith's upper bound theorem become direct
consequences of Theorem \ref{theorem-main}.
\begin{corollary}\label{corollary-3}
With the same assumptions as {\rm Theorem \ref{theorem-main}}, we
have
\[{r\choose k}\le a_{k}\le {m\choose k},\sp ~\For~ 0\le k \le r.\]
\end{corollary}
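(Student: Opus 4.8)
The plan is to obtain Corollary~\ref{corollary-3} as the direct specialization of Theorem~\ref{theorem-main} to $q=0$; there is essentially no new combinatorics to do, only the bookkeeping of checking that the sum in (\ref{formula-main}) collapses and that the range hypothesis is met. First I would set $q=0$ in the two outer binomial coefficients of (\ref{formula-main}), turning ${r+q\choose k}$ into ${r\choose k}$ and ${m+q\choose k}$ into ${m\choose k}$, which are precisely the bounds asserted by the corollary.

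Next I would evaluate the middle sum $\sum_{i=0}^{k}{0\choose k-i}a_{i}$. Using the convention ${0\choose j}=1$ for $j=0$ and ${0\choose j}=0$ for every integer $j\ge 1$ (consistent with the polynomial definition ${x\choose i}=\tfrac{1}{i!}x(x-1)\cdots(x-i+1)$ adopted before (\ref{formula-combinatorial-identity}), since the factor $x=0$ kills the product once $i\ge 1$), only the term $i=k$ survives, so the sum equals $a_k$. Finally I would confirm the range: Theorem~\ref{theorem-main} applies whenever $0\le k\le q+r+1$, and with $q=0$ this reads $0\le k\le r+1$, which contains the range $0\le k\le r$ stated in the corollary, so the theorem applies verbatim. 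Combining these three observations reproduces ${r\choose k}\le a_{k}\le {m\choose k}$ on the interval $0\le k\le r$.

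I expect no genuine obstacle here; the only point deserving a word is that the collapse of the sum rests on the stated binomial convention, and that every index $i$ in the sum satisfies $0\le i\le k$, so the argument $k-i$ is never negative and no ambiguous binomial coefficient arises. As a closing remark one can note that the lower bound already forces $a_k\ge{r\choose k}\ge 1>0$, recovering Whitney's sign-alternating theorem, while the upper bound is exactly Meredith's $a_k\le{m\choose k}$.
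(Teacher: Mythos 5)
Your proposal is correct and takes exactly the same route as the paper, which likewise obtains this corollary by simply setting $q=0$ in (\ref{formula-main}) and notes it as a direct consequence of Theorem \ref{theorem-main}. The only difference is that you spell out the bookkeeping the paper leaves implicit, namely that ${0\choose k-i}$ vanishes for $i<k$ under the falling-factorial convention so the sum collapses to $a_k$, and that the range $0\le k\le r$ sits inside the allowed range $0\le k\le r+1$.
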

Since ${-1 \choose k-i}=(-1)^{k-i}$, after taking $q=-1$ in
(\ref{formula-main}), we obtain two-side bounds for the
partial sums of the coefficient sequence.
\begin{corollary}\label{corollary-4}
With the same assumptions as {\rm Theorem \ref{theorem-main}}, we
have
\[{r-1\choose k}\le (-1)^k\sum_{i=0}^{k}(-1)^ia_{i}\le {m-1\choose k},\sp ~\For~ 0\le k \le r.\]
\end{corollary}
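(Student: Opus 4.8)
The plan is to obtain Corollary \ref{corollary-4} as a direct specialization of Theorem \ref{theorem-main}, taking the free parameter $q=-1$; no fresh induction or combinatorial argument is needed, only bookkeeping on binomial coefficients and signs. The single fact I would record first is the value of the generalized binomial coefficient at top argument $-1$. From the definition ${x\choose j}=\frac{1}{j!}x(x-1)\cdots(x-j+1)$ used in (\ref{formula-combinatorial-identity}), one computes
\[
{-1\choose j}=\frac{(-1)(-2)\cdots(-j)}{j!}=(-1)^{j},\qquad j\in\Bbb{Z}_{\ge 0},
\]
which is exactly the identity flagged in the sentence preceding the corollary.

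Next I would substitute $q=-1$ into (\ref{formula-main}). The hypothesis $0\le k\le q+r+1$ collapses to $0\le k\le r$, matching the stated range, while the two outer terms ${r+q\choose k}$ and ${m+q\choose k}$ become ${r-1\choose k}$ and ${m-1\choose k}$. For the middle sum I would use the identity above together with $(-1)^{k-i}=(-1)^{k}(-1)^{i}$ to write
\[
\sum_{i=0}^{k}{-1\choose k-i}a_{i}=\sum_{i=0}^{k}(-1)^{k-i}a_{i}=(-1)^{k}\sum_{i=0}^{k}(-1)^{i}a_{i}.
\]
Inserting these three evaluations into the two-sided bound of Theorem \ref{theorem-main} yields precisely
\[
{r-1\choose k}\le(-1)^{k}\sum_{i=0}^{k}(-1)^{i}a_{i}\le{m-1\choose k},\qquad 0\le k\le r,
\]
which is the assertion of the corollary.

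Since every step is either a substitution or an elementary sign manipulation, there is no genuine obstacle here. The one point to keep straight is that ${-1\choose j}$ must be interpreted through the polynomial (generalized) binomial definition, so that it equals $(-1)^{j}$ rather than the combinatorial value $0$; with that convention fixed, the corollary follows at once, and it is worth remarking that it recovers and strengthens the sign-alternating phenomenon, exhibiting the partial alternating sums $(-1)^{k}\sum_{i=0}^{k}(-1)^{i}a_{i}$ as themselves positive and sandwiched between consecutive binomial coefficients.
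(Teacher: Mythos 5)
Your proposal is correct and is exactly the paper's own argument: the text preceding Corollary \ref{corollary-4} notes that ${-1\choose k-i}=(-1)^{k-i}$ and then specializes $q=-1$ in (\ref{formula-main}), which is precisely your substitution, sign factorization, and range check $0\le k\le r$. No further commentary is needed.
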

When $k\le r-1$, we have $(-1)^k\sum_{i=0}^{k}(-1)^ia_{i}\ge
{r-1\choose k}\ge 1$, that is to say, the first $r-1$ partial sums
of the coefficient sequence form a sign-alternating sequence.
\begin{corollary}\label{corollary-5}
With the same assumptions as {\rm Theorem \ref{theorem-main}}, we
have
\begin{equation}\label{formula-new}
(-1)^k\sum_{i=0}^{k}{r-i\choose k-i}(-1)^ia_{i}\ge 0,\sp~\For~ 0\le
k \le r.
\end{equation}
\end{corollary}
\begin{proof}
Taking $q=k-r-1$ in (\ref{formula-main}), we have
\[\sum_{i=0}^{k}{k-r-1\choose k-i}a_{i}\ge {k-1\choose k}=0,\sp~\For~ 0\le k \le r.\]
Then (\ref{formula-new}) is obvious since that ${k-r-1\choose
k-i}=(-1)^{k-i}{r-i\choose k-i}$.
\end{proof}

Since the chromatic polynomial $\chi_G(t)$ of a graph $G$ is the
characteristic polynomial of the graphic arrangement
$\mathcal{A}_G$, then the two-side bound $(\ref{formula-main})$
holds for the coefficient sequence of $\chi_G(t)$.
\begin{theorem}\label{theorem-chromatic}
Let $\chi_G(t)=t^n-a_1t^{n-1}+\cdots+(-1)^ra_rt^{n-r}$ be the
chromatic polynomial of a graph $G$ with $n$ vertices, $m$ edges,
and rank $r$. Then the following three statements are equivalent,
\begin{enumerate}[label={\rm (\roman*)}]
\item\label{theorem-i}  $a_k={m\choose k}$ for all $k$ with $1\le k\le r;$
\item\label{theorem-ii} $a_k={r\choose k}$ for all $k$ with $1\le k\le r;$
\item\label{theorem-iii} $G$ is a forest, i.e., $m=r$.
\end{enumerate}
\end{theorem}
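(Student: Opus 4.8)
The plan is to establish the three equivalences by showing that \ref{theorem-iii} implies each of \ref{theorem-i} and \ref{theorem-ii}, and conversely that each of \ref{theorem-i} and \ref{theorem-ii} forces \ref{theorem-iii}; together these give the two equivalences \ref{theorem-i}$\Leftrightarrow$\ref{theorem-iii} and \ref{theorem-ii}$\Leftrightarrow$\ref{theorem-iii}, hence the full statement. The implications out of \ref{theorem-iii} are immediate: if $G$ is a forest, then its graphic arrangement $\mathcal{A}_G$ is boolean, so $(\ref{boolean-formula})$ gives $a_k={m\choose k}$ for every $k$; and since a forest satisfies $m=r$, this same value equals ${r\choose k}$, so both \ref{theorem-i} and \ref{theorem-ii} hold at once.

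For the direction \ref{theorem-i}$\Rightarrow$\ref{theorem-iii}, I would observe that \ref{theorem-i}, together with the trivial identity $a_0=1={m\choose 0}$, says precisely that the characteristic polynomial of $\mathcal{A}_G$ coincides with $(\ref{formula-general-position})$. Proposition \ref{proposition-general-position} then tells us that $\mathcal{A}_G$ is in general position. The key observation is that $\mathcal{A}_G$ is always central, since every hyperplane $x_i=x_j$ contains the diagonal line $x_1=\cdots=x_n$; and a central arrangement in general position is boolean. Therefore $\mathcal{A}_G$ is boolean, which happens exactly when $G$ is a forest, so $m=r$ as required.

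The direction \ref{theorem-ii}$\Rightarrow$\ref{theorem-iii} is the shortest. Specializing \ref{theorem-ii} to $k=1$ gives $a_1={r\choose 1}=r$, while for any arrangement one has $a_1=|\mathcal{A}_G|=m$, the number of edges of $G$. Hence $m=r$, and $G$ is a forest. The degenerate case $r=0$ needs no separate argument, since $r=0$ forces $m=0$ (so $G$ is already a forest) and both \ref{theorem-i} and \ref{theorem-ii} are then vacuous.

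I do not expect a serious obstacle; the argument is largely bookkeeping once the earlier results are in hand. The only step demanding genuine care is \ref{theorem-i}$\Rightarrow$\ref{theorem-iii}, where one must invoke the centrality of the graphic arrangement to upgrade the conclusion of Proposition \ref{proposition-general-position} from ``general position'' to ``boolean''; without centrality, general position alone does not bound the number of hyperplanes. Accordingly I would organize the write-up as the two cycles \ref{theorem-iii}$\Rightarrow$\ref{theorem-i}$\Rightarrow$\ref{theorem-iii} and \ref{theorem-iii}$\Rightarrow$\ref{theorem-ii}$\Rightarrow$\ref{theorem-iii} to avoid repeating the forest-to-boolean computation.
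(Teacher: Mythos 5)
Your proposal is correct and follows essentially the same route as the paper: the forward implications from \ref{theorem-iii} are immediate (you cite the boolean formula where the paper cites Corollary \ref{corollary-3}, an inessential difference), \ref{theorem-ii}$\Rightarrow$\ref{theorem-iii} comes from $a_1=m$, and \ref{theorem-i}$\Rightarrow$\ref{theorem-iii} uses Proposition \ref{proposition-general-position} together with the centrality of $\mathcal{A}_G$ to upgrade general position to boolean, exactly as in the paper.
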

\begin{proof}$\ref{theorem-iii}\Rightarrow\ref{theorem-i}$ and $\ref{theorem-iii}\Rightarrow\ref{theorem-ii}$ are easy consequences of Corollary \ref{corollary-3}. Recall $a_1=|EG|=m$, then $\ref{theorem-ii}\Rightarrow\ref{theorem-iii}$ becomes obvious. From Proposition \ref{proposition-general-position}, $a_k={m\choose k}$ if and only if the graphic arrangement $\mathcal{A}_G$ is in general position. Note that the graphic arrangement $\mathcal{A}_G$ is a central hyperplane arrangement. Then $\mathcal{A}_G$ contains no subset of size larger than $r$. Hence we have $|\mathcal{A}_G|=|EG|=m=r$ which proves $\ref{theorem-i}\Rightarrow\ref{theorem-iii}$.
\end{proof}
To end this section, we present an obvious application of Theorem \ref{theorem-main}. Given a root system $\Phi$ in an $n$-dimensional Euclidean space $V$. Let $\Phi^+$ be the set of positive roots of $\Phi$.
The \emph{Coxeter arrangement} of $\Phi$ is defined as
\[
\mathcal{A}(\Phi)=\{H: \alpha \bm x=0\mid \alpha \in \Phi^+\}.
\]
Below are the Coxeter arrangements $\mathcal{A}(A_{n})$,
$\mathcal{A}(B_{n})$, and $\mathcal{A}(D_{n})$ corresponding to types
$A$, $B$, and $D$.
\begin{eqnarray*}
\text{~Hyperplanes of~} \mathcal{A}(A_{n})&:&x_{i}-x_{j}=0 ,\sp
1\leq i<j\leq n.\\
\text{~Hyperplanes of~} \mathcal{A}(B_{n})&:&x_{i}\pm x_{j}=0;
x_{i}=0 ,\sp 1\leq i<j\leq n.\\
\text{~Hyperplanes of~} \mathcal{A}(D_{n})&:&x_{i}\pm x_{j}=0 ,\sp
1\leq i<j\leq n.
\end{eqnarray*}
Their characteristic polynomials are
\begin{eqnarray*}
\chi(\mathcal{A}(A_n); t)&=&t(t-1)\cdots(t-n+1),\\
\chi(\mathcal{A}(B_n); t)&=&(t-1)(t-3)\cdots(t-2n+1),\\
\chi(\mathcal{A}(B_n); t)&=&(t-1)(t-3)\cdots(t-2n+3)(t-n+1)\\
&=&\chi(\mathcal{A}(B_n); t)+n\chi(\mathcal{A}(B_{n-1}); t).
\end{eqnarray*}
The elementary symmetric polynomial of degree $k$ in n variables is defined as
\[
e_k(x_1,\ldots,x_n)=\sum_{I\in {[n]\choose k}}\prod_{i\in I}x_i
\]
Then we can write
\begin{eqnarray*}
\chi(\mathcal{A}(A_n); t)&=&\sum_{i=0}^n(-1)^ie_i(0,1,\ldots,n-1)t^{n-i},\\
\chi(\mathcal{A}(B_n); t)&=&\sum_{i=0}^n(-1)^ie_i(1,3,\ldots,2n-1)t^{n-i},\\
\chi(\mathcal{A}(B_n); t)&=&\sum_{i=0}^n(-1)^i\left(e_i(1,3,\ldots,2n-1)+
ne_{i-1}(1,3,\ldots,2n-3)\right)t^{n-k}.
\end{eqnarray*}
Applying Theorem \ref{theorem-main}, for $q,k\in \Bbb{Z}$ and $0\le k\le
q+r+1$, we have
\begin{eqnarray*}
{n-1+q\choose k}\le \sum_{i=0}^{k}{q\choose k-i}e_i(0,1,\ldots,n-1)\le {\frac{n(n-1)}{2}+q\choose
k},
\end{eqnarray*}
\begin{eqnarray*}
{n+q\choose k}\le \sum_{i=0}^{k}{q\choose k-i}e_i(1,3,\ldots,2n-1)\le {\frac{n(n+1)}{2}+q\choose
k},
\end{eqnarray*}
\begin{eqnarray*}
{n+q\choose k}\le \sum_{i=0}^{k}{q\choose k-i}(e_i(1,3,\ldots,2n-1)+ne_{i-1}(1,3,\ldots,2n-3))
\le {\frac{n(n+1)}{2}+q\choose k}.
\end{eqnarray*}

\section{Matroids and Arithmetic Matroids}

Recall that a \textit{matroid} $\mathfrak{M}_E=(E,\rk)$ is a finite
list of elements $E$ (called \textit{the ground set}) together with
a \textit{rank function} $\rk: 2^E\rightarrow \mathbb{N}\cup \{0\}$
that satisfies the following axioms:
\begin{enumerate}
\item[(R1)] If $A\subseteq E$, then $\rk(A)\leq |A|$.
\item[(R2)] If $A\subseteq B\subseteq E$, then $\rk(A)\leq \rk(B)$.
\item[(R3)] If $A,B\subseteq E$, then $\rk(A\cup B)+\rk(A\cap B)\leq \rk(A)+\rk(B)$.
\end{enumerate}

The notion of a matroid was introduced by H. Whitney~\cite{Whitney3}
to capture the fundamental properties of dependence and independence
that are common to graphs and vectors of matrices. There are a
number of different axiom systems of matroids that are all
equivalent via elementary \textit{cryptomorphisms}. Instead of
writing all equivalent definitions of matroids, we list several
terminologies for matroids, see \cite{White1986,Oxley} for more
details. An element $v\in E$ is \textit{dependent} on $A\subseteq E$
if $\rk(A\cup\{v\})=\rk(A)$, while the element $v$ is
\textit{independent} from $A$ if $\rk(A\cup\{v\})=\rk(A)+1$. A
non-empty sublist $A\subseteq E$ is a \textit{dependent} set if
there exists $v\in A$ such that $v$ is dependent on
$A\setminus\{v\}$, while $A$ is an \textit{independent} set if for
any $v\in A$, $v$ is independent from $A\setminus\{v\}$. We assume
that the empty set is independent. A sublist $B\subseteq E$ is a
\textit{basis} if it is a maximal independent set, while a sublist
$C\subseteq E$ is a \textit{circuit} if it is a minimal dependent
set. An element $v\in E$ is a \textit{coloop} or a \textit{free}
element if $\rk(E\setminus\{v\})=\rk(E)-1$, $v$ is a \textit{loop} or
a \textit{torsion} if $\rk(\{v\})=0$, and $v$ is \textit{proper} if
$v$ is not a loop and $\rk(E\setminus\{v\})=\rk(E)$. It is easy to
check that any element of a matroid is just one of the previous
three types.

There are two classic operations on $\mathfrak{M}_E$,
\textit{deletion} and \textit{contraction}. Given a matroid
$\mathfrak{M}_E=(E,\rk)$ and an element $v\in E$, the
\textit{deletion} of $\mathfrak{M}_E$ with respect to $v$ is the
matroid $\mathfrak{M}_{E\setminus v}=(E\setminus\{v\},\rk_{E\setminus
v})$, where $\rk_{E\setminus v}(A):=\rk(A)$ for all $A\subseteq
E\setminus\{v\}$, while the \textit{contraction} of $\mathfrak{M}_E$
with respect to $v$ is the matroid
$\mathfrak{M}_{E/v}=(E\setminus\{v\},\rk_{E/v})$, where
$\rk_{E/v}(A):=\rk(A\cup\{v\})-\rk(\{v\})$ for all $A\subseteq
E\setminus\{v\}$.

The \textit{Tutte polynomial} $T_E(x,y)=T(\mathfrak{M}_E;x,y)$ of a
matroid $\mathfrak{M}_E=(E,\rk)$ is defined as
$$T_E(x,y):=\sum_{A\subseteq
E}(x-1)^{\rk(E)-\rk(A)}(y-1)^{|A|-\rk(A)}.$$ One of the important
properties of the Tutte polynomial is the
following \textit{deletion-contraction} recurrence relation
\begin{equation}
T_E(x,y)=T_{E\setminus v}(x,y)+T_{E/v}(x,y),
\end{equation}
where $v\in E$ is a proper element of $\mathfrak{M}_E$,
$T_{E\setminus v}(x,y)$ is the Tutte polynomial of
$\mathfrak{M}_{E\setminus v}$, and $T_{E/v}(x,y)$ is the Tutte
polynomial of $\mathfrak{M}_{E/v}$.

The \textit{characteristic polynomial} $\chi(\mathfrak{M}_E;t)$ of a
matroid $\mathfrak{M}_E=(E,\rk)$ is defined as
$$\chi(\mathfrak{M}_E;t):=\sum_{A\subseteq
E}(-1)^{|A|}t^{\rk(E)-\rk(A)}.$$ It is easy to see that
$\chi(\mathfrak{M}_E;t)=(-1)^{\rk(E)}T_E(1-t,0)$ and the
characteristic polynomial satisfies the following
deletion-contraction recurrence relation
\begin{equation}
\chi(\mathfrak{M}_E;t)=\chi(\mathfrak{M}_{E\setminus
v};t)-\chi(\mathfrak{M}_{E/v};t),
\end{equation}
where $v\in E$ is a proper element of $\mathfrak{M}_E$. If one of
the elements of $E$ is a loop, say $u$, then $\chi(\mathfrak{M}_{E};t)=0$ since
$\rk(A\cup\{u\})=\rk(A)$ for all sublists $A\subseteq
E\setminus\{u\}$. If all elements of
$E$ are coloops, then
\begin{equation}
\chi(\mathfrak{M}_E;t)=\sum_{i=0}^{m}\sum_{|A|=i}(-1)^it^{m-i}=\sum_{i=0}^{m}(-1)^i{m\choose
i}t^{m-i},
\end{equation}
where $m=|E|$.

Note that the set of all normal vectors of a hyperplane arrangement forms a representable matroid under the linear dependence and independence relation of vector space. Recall that all arguments used in Theorem \ref{theorem-main} are not much more than deletion-contraction recurrence and induction on the number of hyperplanes in the arrangement. Hence, we are able to obtain the matroid versions of Theorem \ref{theorem-main} and other consequences by those arguments, which we omit here.

\begin{theorem}
Let $\mathfrak{M}_E=(E,\rk)$ be a matroid without loops and its
characteristic polynomial
$$\chi(\mathfrak{M}_E;t)=a_0t^r-a_1t^{r-1}+a_2t^{r-2}+\cdots+(-1)^ra_r,$$
where $r=\rk(E)$. If $q,k\in \Bbb{Z}$ satisfies $0\le k\le q+r+1$,
then
\begin{equation}\label{matroid-1}
{r+q\choose k}\le \sum_{i=0}^{k}{q\choose k-i}a_{i}\le
{m+q\choose k},
\end{equation}
where $m=|E|$. In particular, taking $q=-1$ and $q=k-r-1$ respectively, we have
\begin{equation}\label{matroid-2}
{r-1\choose k}\le (-1)^k\sum_{i=0}^{k}(-1)^ia_{i}\le {m-1\choose k},\sp ~\For~ 0\le k \le r,
\end{equation}
and
\begin{equation}\label{matroid-3}
(-1)^k\sum_{i=0}^{k}{r-i\choose k-i}(-1)^ia_{i}\ge 0,\sp~\For~ 0\le
k \le r.
\end{equation}
\end{theorem}

In 1974, Dowling and Wilson \cite[Theorem 2]{Dowling-Wilson} found a
lower bound for the coefficients of the characteristic polynomial of
a finite geometric lattice, the flag lattice of a matroid. Here we shall recover it by applying (\ref{matroid-3}).
\begin{theorem}{\rm \cite{Dowling-Wilson}}
Let $\mathfrak{M}_E=(E,\rk)$ be a matroid without loops and its
characteristic polynomial
\[
\chi(\mathfrak{M}_E;t)=a_0t^r-a_1t^{r-1}+a_2t^{r-2}+
\cdots+(-1)^ra_r,
\]
where $r=\rk(E)$.
Then
\begin{equation}\label{bound-new}
a_k\ge (m-r){r-1\choose k-1}+{r\choose k},\sp~\For~ 0\le k \le r.
\end{equation}
\end{theorem}
\begin{proof}Applying (\ref{matroid-3}), we have
\begin{eqnarray*}
a_k+(-1)^k\sum_{i=0}^{k-1}{r-i\choose
k-i}(-1)^ia_{i}=(-1)^k\sum_{i=0}^{k}{r-i\choose k-i}(-1)^ia_{i}\ge 0
\end{eqnarray*}
which implies
\begin{eqnarray*}
a_{k}
&\ge&(-1)^{k-1}\sum_{i=0}^{k-1}{r-i\choose k-i}(-1)^ia_{i}\\
&=&(r-k+1)(-1)^{k-1}\sum_{i=0}^{k-1}{r-i\choose
k-1-i}\left(1-\frac{k-1-i}{k-i}\right)(-1)^ia_{i}.
\end{eqnarray*}
Applying (\ref{matroid-3}), we have
\begin{eqnarray*}
a_{k}
&\ge& (-1)^{k}(r-k+1)\sum_{i=0}^{k-1}{r-i\choose k-1-i}\frac{k-1-i}{k-i}(-1)^ia_{i}\\
&=&(-1)^{k}(r-k+1)\frac{(r-k+2)}{2}\sum_{i=0}^{k-2}{r-i\choose k-2-i}\left(1-\frac{k-2-i}{k-i}\right)(-1)^ia_{i}\\
&\ge& (-1)^{k-1}(r-k+1)\frac{(r-k+2)}{2}\sum_{i=0}^{k-2}{r-i\choose k-2-i}\frac{k-2-i}{k-i}(-1)^ia_{i}\\
&\ge &(-1)^{k-2}(r-k+1)\frac{(r-k+2)}{2}\frac{(r-k+3)}{3}
\sum_{i=0}^{k-3}{r-i\choose k-3-i}\frac{k-3-i}{k-i}(-1)^ia_{i}\\
&&\cdots\cdots\\
&\ge& (-1)^{3}{r-2\choose k-2}\sum_{i=0}^{2}{r-i\choose
2-i}\frac{2-i}{k-i}(-1)^ia_{i}.
\end{eqnarray*}
Since $a_0=1$ and $a_1=m$, then
\begin{eqnarray*}
a_{k}\geq m{r-1\choose k-1}-(k-1){r\choose k}=(m-r){r-1\choose
k-1}+{r\choose k}.
\end{eqnarray*}
\end{proof}

Let $R\subseteq S\subseteq E$ be sublists of $E$. Then
$[R,S]:=\{A:R\subseteq A\subseteq S\}$ is a \textit{molecule} if $S$
is the disjoint union $S=R\cup F\cup T$ and for each $A\in [R,S]$,
$$\rk(A)=\rk(R)+|A\cap F|.$$ An \textit{arithmetic matroid} $\mathfrak{A}_E=(\mathfrak{M}_E, m)=(E,\rk,m)$ is a matroid $\mathfrak{M}_E$ equipped with a \textit{multiplicity function} $m: 2^E\rightarrow \mathbb{N}$ satisfying the following axioms:
\begin{enumerate}
\item[(M1)] If $A\subseteq E$ and $v\in E$ is dependent on $A$, then $m(A\cup \{v\})$ divides $m(A)$.
\item[(M2)] If $A\subseteq E$ and $v\in E$ is independent from $A$, then $m(A)$ divides $m(A\cup \{v\})$.
\item[(M3)] If $[R,S]$ is a molecule, then $$m(R)m(S)=m(R\cup F)m(R\cup T).$$
\item[(M4)] If $R\subseteq S\subseteq E$ and $\rk(R)=\rk(S)$, then $$\rho(R,S):=\sum_{A\in[R,S]}(-1)^{|A|-|R|}m(A)\geq 0.$$
\item[(M5)] If $R\subseteq S\subseteq E$ and $|R|+\rk(R^c)=|S|+\rk(S^c)$, then $$\rho^{\ast}(R,S):=\sum_{A\in[R,S]}(-1)^{|A|-|R|}m(A^c)\geq 0.$$
\end{enumerate}
The notion of an arithmetic matroid was introduced by M. D'Adderio
and L. Moci~\cite{Moci2011} to axiomatize both the linear algebra
and the arithmetic of a list of elements in a finitely generated
abelian group. It can be seen as a generalization of the notion of a
matroid.

Also, We can consider the \textit{deletion} and \textit{contraction}
of arithmetic matroids. Given an arithmetic matroid
$\mathfrak{A}_E=(\mathfrak{M}_E,m)=(E,\rk,m)$ and an element $v\in
E$, the \textit{deletion} of $\mathfrak{A}_E$ with respect to $v$ is
the pair $\mathfrak{A}_{E\setminus v}=(\mathfrak{M}_{E\setminus
v},m_{E\setminus v})$, where $\mathfrak{M}_{E\setminus
v}=(E\setminus\{v\},\rk_{E\setminus v})$ is the deletion of
$\mathfrak{M}_E$ and $m_{E\setminus v}(A):=m(A)$ for all $A\subseteq
E\setminus\{v\}$, while the \textit{contraction} of $\mathfrak{A}_E$
with respect to $v$ is the pair
$\mathfrak{A}_{E/v}=(\mathfrak{M}_{E/v},m_{E/v})$, where
$\mathfrak{M}_{E/v}=(E\setminus\{v\},\rk_{E/v})$ is the contraction
of $\mathfrak{M}_E$ and $m_{E/v}(A):=m(A\cup\{v\})$ for all
$A\subseteq E\setminus\{v\}$. It is easy to see that they are in fact
two arithmetic matroids.

The \textit{arithmetic Tutte polynomial}
$M_E(x,y)=M(\mathfrak{A}_E;x,y)$ of an arithmetic matroid
$\mathfrak{A}_E=(\mathfrak{M}_E,m)$ is defined as
$$M_E(x,y):=\sum_{A\subseteq
E}m(A)(x-1)^{\rk(E)-\rk(A)}(y-1)^{|A|-\rk(A)}.$$ M. D'Adderio and L.
Moci provided a combinatorial interpretation of the arithmetic Tutte
polynomial of any arithmetic matroid, showing in particular the
positivity of its coefficients~\cite[Theorem 5.1]{Moci2011}. Similar
to the Tutte polynomials, the arithmetic Tutte polynomials satisfy
the following deletion-contraction recurrence relations
\begin{equation}
M_E(x,y)=
\begin{cases}
M_{E\setminus v}(x,y)+M_{E/v}(x,y), & \text{if $v$ is proper,} \\
(x-1)M_{E\setminus v}(x,y)+M_{E/v}(x,y), & \text{if $v$ is a coloop,} \\
M_{E\setminus v}(x,y)+(y-1)M_{E/v}(x,y), & \text{if $v$ is a loop,}
\end{cases}
\end{equation}
where $v$ is an element of $E$, $M_{E\setminus v}(x,y)$ is the
arithmetic Tutte polynomial of $\mathfrak{A}_{E\setminus v}$, and
$M_{E/v}(x,y)$ is the arithmetic Tutte polynomial of
$\mathfrak{A}_{E/v}$.

The \textit{characteristic polynomial} $\chi(\mathfrak{A}_E;t)$ of
an arithmetic matroid $\mathfrak{A}_E=(\mathfrak{M}_E,m)$ is defined
as
$$\chi(\mathfrak{A}_E;t):=\sum_{A\subseteq
E}(-1)^{|A|}m(A)t^{\rk(E)-\rk(A)}.$$ It is easy to see that
$\chi(\mathfrak{A}_E;t)=(-1)^{\rk(E)}M_E(1-t,0)$ and the
characteristic polynomial satisfies the following
deletion-contraction recurrence relations
\begin{equation}
\chi(\mathfrak{A}_E;t)=
\begin{cases}
\chi(\mathfrak{A}_{E\setminus v};t)-\chi(\mathfrak{A}_{E/v};t), & \text{if $v$ is proper or a loop,} \\
t\cdot\chi(\mathfrak{A}_{E\setminus
v};t)-\chi(\mathfrak{A}_{E/v};t), & \text{if $v$ is a coloop.}
\end{cases}
\end{equation}

\begin{theorem}
Let $\mathfrak{A}_E=(\mathfrak{M}_E,m)$ be an arithmetic matroid of
rank $r=\rk(E)$ and its characteristic polynomial
$$\chi(\mathfrak{A}_E;t)=a_0t^r+a_1t^{r-1}+a_2t^{r-2}+\cdots+a_{r-1}t+a_r.$$
Then $(-1)^{k}a_k\geq0$ and $(-1)^{k}\sum_{i=0}^{k}a_i\geq0$, for
$0\leq k\leq r$.
\end{theorem}

\begin{proof}
We shall argue by induction on $|E|$. If $|E|=0$, then the result is
immediate. Suppose the result holds for $|E|\leq m$ and let
$|E|=m+1$. If all elements of $E$ are loops, then
$$\chi(\mathfrak{A}_E;t)=\sum_{A\subseteq E}(-1)^{|A|}m(A)=a_0.$$
Since $\rk(\emptyset)=\rk(E)=0$, so $a_0=\rho(\emptyset,E)\geq0$.
Otherwise, there exists an element $v\in E$, which is not a loop.\\
Case I. If $v$ is proper, then $\rk_{E\setminus
v}(E\setminus\{v\})=\rk(E)$ and $\rk_{E/v}(E\setminus\{v\})=\rk(E)-1$.
We can write $$\chi(\mathfrak{A}_{E\setminus
v};t)=b_0t^r+b_1t^{r-1}+b_2t^{r-2}+\cdots+b_{r-1}t+b_r$$ and
$$\chi(\mathfrak{A}_{E/v};t)=c_0t^{r-1}+c_1t^{r-2}+c_2t^{r-3}+\cdots+c_{r-2}t+c_{r-1}.$$
Using the deletion-contraction recurrence
$\chi(\mathfrak{A}_E;t)=\chi(\mathfrak{A}_{E\setminus
v};t)-\chi(\mathfrak{A}_{E/v};t)$, we have $a_0=b_0$ and
$a_k=b_k-c_{k-1}$, for $1\leq k\leq r$. Then by the induction
assumption, $a_0=b_0\geq0$,
$(-1)^ka_k=(-1)^kb_k+(-1)^{k-1}c_{k-1}\geq0$ and
\begin{align*}
(-1)^k\sum_{i=0}^ka_i & = (-1)^k\Big(b_0+\sum_{i=0}^k(b_i-c_{i-1})\Big)\\
                      & =
                      (-1)^k\sum_{i=0}^kb_i+(-1)^{k-1}\sum_{i=0}^{k-1}c_i\geq0,
\end{align*}
for $1\leq k\leq r$.\\
Case II. If $v$ is a coloop, then $\rk_{E\setminus
v}(E\setminus\{v\})=\rk(E)-1$ and
$\rk_{E/v}(E\setminus\{v\})=\rk(E)-1$. We can write
$$\chi(\mathfrak{A}_{E\setminus
v};t)=b_0t^{r-1}+b_1t^{r-2}+b_2t^{r-3}+\cdots+b_{r-2}t+b_{r-1}$$ and
$$\chi(\mathfrak{A}_{E/v};t)=c_0t^{r-1}+c_1t^{r-2}+c_2t^{r-3}+\cdots+c_{r-2}t+c_{r-1}.$$
Using the deletion-contraction recurrence
$\chi(\mathfrak{A}_E;t)=t\cdot\chi(\mathfrak{A}_{E\setminus
v};t)-\chi(\mathfrak{A}_{E/v};t)$, we have $a_0=b_0$, $a_r=-c_{r-1}$
and $a_k=b_k-c_{k-1}$, for $1\leq k\leq r-1$. Then by the induction
assumption, $a_0=b_0\geq0$, $(-1)^ra_r=(-1)^{r-1}c_{r-1}\geq0$,
$(-1)^ka_k=(-1)^kb_k+(-1)^{k-1}c_{k-1}\geq0$ and
\begin{align*}
(-1)^k\sum_{i=0}^ka_i & = (-1)^k\Big(b_0+\sum_{i=0}^k(b_i-c_{i-1})\Big)\\
                      & =
                      (-1)^k\sum_{i=0}^kb_i+(-1)^{k-1}\sum_{i=0}^{k-1}c_i\geq0,
\end{align*}
for $1\leq k\leq r-1$. Together with
$(-1)^r\sum_{i=0}^ra_i=(-1)^r\chi(\mathfrak{A}_E;1)=M_E(0,0)\geq0$,
the result holds for the coefficients of the characteristic
polynomial of $\mathfrak{A}_E$.\\
Therefore, by induction, $(-1)^{k}a_k\geq0$ and
$(-1)^{k}\sum_{i=0}^{k}a_i\geq0$, for $0\leq k\leq r$.
\end{proof}

Let $E$ be a finite list of elements of a finitely generated abelian
group $G$. Given a sublist $A\subseteq E$, let $\langle A\rangle$ be
the subgroup of $G$ generated by the elements of $A$. The rank
$\rk(A)$ is defined as the rank of the finitely generated abelian
group $\langle A\rangle$ and the multiplicity $m(A)$ is defined as
the cardinality of the torsion subgroup of $G/\langle A\rangle$.
Then $\mathfrak{A}_E=(\mathfrak{M}_{E}, m)=(E,\rk,m)$ is an
arithmetic matroid. Given an arithmetic matroid
$\mathfrak{A}_E=(\mathfrak{M}_{E}, m)$, if it is
\textit{representable}, i.e., $E$ can be regarded as a list of
elements in a finitely generated abelian group $G$ and we can always
assume that $\rk(E)$ is equal to the rank of $G$, then we have the
following \textit{generalized toric arrangement}
$\mathcal{T}_E:=\{H_{\lambda}:\lambda\in E\}$ defined by $E$ on
$T:=\text{Hom}(G,\mathbb{C}^*)$, where $H_{\lambda}:=\{t\in
T:\lambda(t)=1\}$ (see \cite{Moci2011}). The \textit{characteristic
polynomial} of the toric arrangement $\mathcal{T}_E$ is defined as
$$\chi(\mathcal{T}_E;t):=\sum_{C\in
\mathcal{C}(\mathcal{T}_E)}\mu(T_C,C)t^{\text{dim}\ C},$$ where
$\mathcal{C}(\mathcal{T}_E)$ is the set of all the connected
components of all possible nonempty intersections of the
subvarieties $H_{\lambda}$, ordered by reverse inclusion with the
M\"obius function $\mu$ and $T_C$ is the connected component of $T$
that contains $C$. L. Moci~\cite{Moci2012} proved that
$$\chi(\mathcal{T}_E;t)=(-1)^{\rk(E)}M_E(1-t,0)=\chi(\mathfrak{A}_E;t).$$ Hence, we have the following Corollary.

\begin{corollary}
Let $\mathfrak{A}_E=(\mathfrak{M}_E,m)$ be a representable
arithmetic matroid of rank $r=\rk(E)$ and the characteristic
polynomial of the associated toric arrangement $\mathcal{T}_E$
$$\chi(\mathcal{T}_E;t)=a_0t^r+a_1t^{r-1}+a_2t^{r-2}+\cdots+a_{r-1}t+a_r.$$
Then $(-1)^{k}a_k\geq0$ and $(-1)^{k}\sum_{i=0}^{k}a_i\geq0$, for
$0\leq k\leq r$.
\end{corollary}

\section{Discussions and Problems}
Let $\mathcal{A}$ be an $n$-dimensional arrangement of $m$
hyperplanes and rank $r$. If all hyperplanes of $\mathcal{A}$ are
restricted to the $r$-dimensional subspace spanned by their normal
vectors, we will obtain an $r$-dimensional arrangement of $m$
hyperplanes and rank $r$, whose characteristic polynomial has the
same coefficient sequence as $\chi(\mathcal{A};t)$. Under this
restriction, the characteristic polynomial becomes $t$-free, i.e,
containing no $t$ as a factor. So we can assume, throughout this
section, the hyperplane arrangement $\mathcal{A}$ is essential,
i.e., the characteristic polynomial of $\mathcal{A}$ is
\begin{equation}\label{formula-chi}
\chi(\mathcal{A};t)=a_0t^r-a_1t^{r-1}+\cdots+(-1)^ra_r.
\end{equation}
Recall the no broken circuit Theorem \ref{theorem-BC} that $a_k$
counts the number of $\chi$-independent $k$-subsets of
$\mathcal{A}$. It is then obvious that $a_k\le {m\choose k}$. On the
other hand, $a_r\ne 0$ implies that there exists at least one
$\chi$-independent $r$-subset $\mathcal{B}$ of $\mathcal{A}$. Note
the fact from the definition that any subset of a $\chi$-independent
set is still $\chi$-independent. Then all subsets of $\mathcal{B}$
are $\chi$-independent, which implies $a_k\ge {r\choose k}$ by the
no broken circuit Theorem \ref{theorem-BC}. In this sense, the
inequality ${r\choose k}\le a_k\le {m\choose k}$ of Corollary
\ref{corollary-3} can be easily obtained from the no broken circuit
theorem. Note that from the form of formula (\ref{formula-main}),
Theorem \ref{theorem-main} can be viewed as a generalization of
Corollary \ref{corollary-3}. It leads us to the following problem,
whether the formula (\ref{formula-main}) can be directly obtained
from the no broken circuit theorem or not. To this purpose, we
introduce an operation $\D$ of the characteristic polynomial
$\chi(\mathcal{A};t)$,
\[\D \chi(\mathcal{A};t)=\frac{\chi(\mathcal{A};t)-\chi(\mathcal{A};1)}{t-1}.\]
Denote $\D^0$ the identity operation and $\D^i
\chi(\mathcal{A};t)=\underbrace{\D\circ\cdots\circ\D}_i
\chi(\mathcal{A};t)$ for $i\in \Bbb{N}$. Note that
\begin{eqnarray*}
\chi(\mathcal{A};t)-\chi(\mathcal{A};1)&=&a_0(t^r-1)-a_1(t^{r-1}-1)+\cdots+(-1)^{r-1}a_{r-1}(t-1)\\
&=&(t-1)\left(\sum_{k=0}^{r-1}t^{r-1-k}\sum_{i=0}^{k}(-1)^ia_i\right).
\end{eqnarray*}
Then we have
\begin{eqnarray*}
\D
\chi(\mathcal{A};t)=\sum_{k=0}^{r-1}\left(\sum_{i=0}^{k}(-1)^ia_i\right)t^{r-1-k}.
\end{eqnarray*}
In general, for any non-positive integer $q$, we have
\begin{eqnarray*}
\D^{-q}
\chi(\mathcal{A};t)&=&\sum_{k=0}^{r+q}(-1)^k\left(\sum_{i=0}^{k}{q\choose
k-i}a_{i}\right)t^{r+q-k}.
\end{eqnarray*}
If $\D^{-q} \chi(\mathcal{A};t)$ can be geometrically realized as
the characteristic polynomial of an arrangement of $m+q$ hyperplanes
and rank $r+q$, from discussions at the beginning of this section,
the inequality (\ref{formula-main}) can be easily obtained from the
no broken circuit theorem, which answers the previous question. So
our question is reduced to finding a geometric realization of
$\D^{-q} \chi(\mathcal{A};t)$ as the characteristic polynomial of
some arrangement of hyperplanes for any $q\in \Bbb{Z}_{\le 0}$.

Furthermore, it can be further reduced to finding a geometric
realization of $\D \chi(\mathcal{A};t)$ for any $\mathcal{A}$, i.e.,
$\D\chi(\mathcal{A};t)=\chi(\mathcal{A}_1;t)$ for some hyperplane
arrangement $\mathcal{A}_1$. Similarly, we shall have a hyperplane
arrangement $\mathcal{A}_2$ such that
$\D\chi(\mathcal{A}_1;t)=\chi(\mathcal{A}_2;t)$. Continuing this
process, the geometric realization of $\D^{-q}\chi(\mathcal{A};t)$
can be obtained reductively for all $q\in \Bbb{Z}_{\le 0}$.

When all hyperplanes in $\mathcal{A}$ pass through the origin
(called a linear hyperplane arrangement), we are able to construct
an affine hyperplane arrangement $\d\mathcal{A}$ such that
$\chi(\d\mathcal{A};t)=\D\chi(\mathcal{A};t)$. Suppose $\mathcal{A}$
is a linear arrangement of $m+1$ hyperplanes in $\Bbb{R}^n$. Given
$K_0\in \mathcal{A}$ with the defining equation $K_0:
\sum_{i=1}^n\alpha_ix_i=0$, the \emph{deconing} $\d\mathcal{A}$ of
$\mathcal{A}$ is an arrangement of $m$ hyperplanes in the affine
space $K_1:\sum_{i=1}^n\alpha_ix_i=1$, which is defined by
\[\d\mathcal{A}=\{H\cap K_1\mid H\in \mathcal{A}, H\neq K_0\}.\]
Since $\chi(\mathcal{A},1)=0$ when $\mathcal{A}$ is linear, we have
\[\chi(\d\mathcal{A};t)=\D\chi(\mathcal{A};t).\]
Namely, for the linear hyperplane arrangement $\mathcal{A}$, the
deconing construction can geometrically realize
$\D\chi(\mathcal{A};t)$ as the characteristic polynomial of the
hyperplane arrangement $\d\mathcal{A}$. However, it is not easy to
find a generalization of this construction directly for affine
hyperplane arrangements.

To simplify our problem, we introduce another construction, called
{\emph coning}, which is the inverse operation of deconing. If a
hyperplane $H$ in $\Bbb{R}^n$ is defined by $H:a_1x_1+\cdots
+a_nx_n=b$, let $\c H$ be a hyperplane in $\Bbb{R}^{n+1}$ defined to
be $\c H:a_1x_1+\cdots +a_nx_n=bx_{n+1}$. The coning $\c\mathcal{A}$
of $\mathcal{A}$ is a linear hyperplane arrangement in
$\Bbb{R}^{n+1}$ consisting of all $\c H$ with $H\in \mathcal{A}$ and
an extra hyperplane $K:x_{n+1}=0$. For the coning construction, we
have
\[
\chi(\c \mathcal{A}; t)=(t-1)\chi(\mathcal{A}; t).
\]

For any essential hyperplane arrangement $\mathcal{A}$ with the
characteristic polynomial
$\chi(\mathcal{A};t)=a_0t^r-a_1t^{r-1}+\cdots+(-1)^ra_r$, if there
exists a hyperplane $\mathcal{B}$ such that
$\chi(\mathcal{B};t)=\D\chi(\mathcal{A};t)$, then except for the
constant term, the coefficients of the characteristic polynomial
$\chi(\c\mathcal{B};t)$ are the same as $\chi(\mathcal{A};t)$, i.e.,
\[
\chi(\c\mathcal{B};t)=a_0t^r-a_1t^{r-1}+\cdots+(-1)^{r-1}a_{r-1}t-
[a_0-a_1+\cdots+(-1)^{r-1}a_{r-1}],
\]
Conversely, suppose there exists a linear hyperplane arrangement
$\mathcal{C}$ such that except for the constant term, all
coefficients of its characteristic polynomial are the same as
$\mathcal{A}$. Then we have
\[
\chi(\d\mathcal{C};t)=\D\chi(\mathcal{C};t)=\D\chi(\mathcal{A};t).
\]
Hence, the above question concerning the interpretation of the formula (\ref{formula-main}) is finally reduced to the following problem.\\
\vspace{-.3cm}\\
{\bf Problem.} Given an essential hyperplane arrangement
$\mathcal{A}$, is there a linear hyperplane arrangement
$\mathcal{B}$ whose characteristic polynomial is
$\chi(\mathcal{B};t)=\chi(\mathcal{A}; t)-\chi(\mathcal{A};1)$?\\

When we consider the characteristic polynomial of matroids, the problem can be stated as follows. Let $\mathfrak{M}_E=(E, \rk)$ be a matroid and $r=\rk(E)$, whose characteristic polynomial is written (abuse of notation) to be
\[
\chi(\mathfrak{M}_E;t)=a_0t^r-a_1t^{r-1}+\cdots+(-1)^ra_r..
\]
Note that $\chi(\mathfrak{M}_{E};1)=0$ which is easily seen from the deletion-contraction recurrence. Similarly, we can define
\[
\D\chi(\mathfrak{M}_{E};t)=\frac{\chi(\mathfrak{M}_E;t)}{t-1}.
\]
Our problem is whether there is an matroid $\mathfrak{M}_{E^\prime}=(E^\prime, \rk^\prime)$ such that
\[
\chi(\mathfrak{M}_{E^\prime};t)=\D\chi(\mathfrak{M}_E;t)-\D\chi(\mathfrak{M}_E;1).
\]
In the following, we give a sufficient condition for the matroid $\mathfrak{M}_E$ such that the above problem holds.
\begin{proposition}
Let $\mathfrak{M}_E=(E, \rk)$ be a matroid with $\rk(E)=r$. Let $\mathcal{C}$ be its circuit structure, i.e, the collection of all circuits of $\mathfrak{M}_E$. If there is an element $v\in E$ such that every circuit $C\in \mathcal{C}$ containing $v$ is of size $r+1$, and the contraction $\mathfrak{M}_{E/v}=(E\setminus\{v\}, \rk_{E/v})$ with respect to $v$ has the circuit structure
\[
\mathcal{C}^\prime=\{C\setminus \{v\}\mid C\in \mathcal{C}\},
\]
then we have
\[
\chi(\mathfrak{M}_{E/v};t)=\D\chi(\mathfrak{M}_E;t)-
\D\chi(\mathfrak{M}_E;1).
\]
\end{proposition}
\begin{proof}Given a linear order  on $E$ with the maximal element $v$.
%It is easy from the assumption to obtain the partition $\mathcal{C}=\mathcal{C}_0\sqcup\mathcal{C}_v$, where
%\begin{eqnarray*}
%\mathcal{C}_0=\{C\in \mathcal{C}\mid v\notin C\}\quad\And\quad
%\mathcal{C}_v=\{C\in \mathcal{C}\mid v\in C\}\subseteq{E\choose r+1}.
%\end{eqnarray*}
%and the partition $\mathcal{C}^\prime=\mathcal{C}_0\sqcup \mathcal{C}_v^\prime$, where
%\[
%\mathcal{C}_v^\prime=\{C\setminus\{v\}\mid C\in \mathcal{C}_v\}\subseteq{E\setminus\{v\}\choose r}.
%\]
Write
\[
\chi(\mathfrak{M}_{E/v};t)=b_0t^{r-1}-b_1t^{r-2}+\cdots+(-1)^{r-1}b_{r-1}.
\]
Note that $\chi(\mathfrak{M}_{E/v};1)=0$ and
\begin{eqnarray*}
\D\chi(\mathfrak{M}_E;t)&=&\frac{\chi(\mathfrak{M}_E;t)}{t-1}\\
&=&a_0t^{r-1}-(a_1-a_0)t^{r-2}+(a_2-a_1+a_0)t^{r-3}+\cdots
\end{eqnarray*}
It suffices to show that $a_i=b_i+b_{i-1}$ for $0\le i\le r-2$ (assuming $b_{-1}=0$).
Recall that a broken circuit is a subset obtained by removing the maximal element of a circuit under the given order of $E$. The classical no broken circuit theorem on matroids tells $a_i=|\Omega_i|$, where
\[
\Omega_i=\{S\subseteq E\mid |S|=i, S\text{ contains no broken circuit of } \mathfrak{M}\}.
\]
Let $\Omega_i=\Omega_{i,0}\sqcup\Omega_{i,v}$, where
\[
\Omega_{i,0}=\{S\in \Omega_i\mid v\notin S\}\quad\And\quad\Omega_{i,v}=\{S\in \Omega_i\mid v\in S\}.
\]
Similarly, we have $b_i=|\Omega_i^\prime|$ and $b_{i-1}=|\Omega_{i-1}^\prime|$, where
\[
\Omega_i^\prime=\{T\subseteq E\setminus \{v\}\mid |T|=i, T\text{ contains no broken circuit of } \mathfrak{M}_{E/v}\}.
\]
It remains to show that $\Omega_{i,0}=\Omega_i^\prime$ for $i\le r-2$, and $\Omega_{i-1}^\prime=\{S\setminus \{v\}\mid S\in \Omega_{i,v}\}$ for $i\le r-1$.

First we prove $\Omega_{i,0}= \Omega_i^\prime$ for $i\le r-2$. For any $S\in \Omega_{i,0}$, suppose $S\notin \Omega_i^\prime$, i.e., there is an element $e\in E\setminus \{v\}$ such that $S\sqcup\{e\}$ contains a circuit $C$ of $\mathfrak{M}_{E/v}$. From the assumption on the circuit structure of $\mathfrak{M}_{E/v}$, either $C$ or $C\sqcup\{v\}$ is a circuit of $\mathfrak{M}_E$. Since $S$ contains no broken circuits, it forces $C\sqcup\{v\}$ to be a circuit of $\mathfrak{M}_E$ and then $|C|=r$, a contradiction to $|S|=i\le r-2$. Conversely, for any $T\in\Omega_i^\prime$, suppose there is an element $e\in E$ such that $T\sqcup \{e\}$ contains a circuit of $\mathfrak{M}$. We have $v\neq e$. Otherwise, $T$ contains a circuit of $\mathfrak{M}_{E/v}$ from the assumption. Hence $T\sqcup \{e\}\subseteq E\setminus\{v\}$ contains a circuit of $\mathfrak{M}_{E/v}$ which is a contradiction to $T\in\Omega_i^\prime$.

Next we prove $\Omega_{i-1}^\prime=\{S\setminus \{v\}\mid S\in \Omega_{i,v}\}$ for $i\le r-1$. For any $T\in\Omega_{i-1}^\prime$, suppose $T\sqcup\{v\}\notin \Omega_{i,v}$, i.e., there is an element $e\in E$ such $T\sqcup\{e,v\}$ contains a circuit $C$ of $\mathfrak{M}_E$. We have $v\in \mathcal{C}$. Otherwise, $T\sqcup\{e\}$ contains the circuit $C$ of $\mathfrak{M}_{E/v}$, a contradiction to $T\in\Omega_{i-1}^\prime$.  Thus the circuit $C$ must have size $r+1$ by the assumption of the circuit structure on $\mathfrak{M}_E$. However, $|T|=i-1\le r-2$ for $i\le r-1$, a contradiction. Hence we obtain $T\sqcup\{v\}\in \Omega_{i,v}$. Conversely, if $S\in \Omega_{i,v}$ for $i\le r-1$, suppose $S\setminus \{v\}\notin \Omega_{i-1}^\prime$, i.e, there is an element $e$ of $E\setminus \{v\}$ such that $S\sqcup \{e\}\setminus \{v\}$ contains a circuit $C$ of $\mathfrak{M}_{E/v}$. From the assumption on the circuit structure of $\mathfrak{M}_{E/v}$, either $C$ or $C\sqcup\{v\}$ is a circuit of $\mathfrak{M}_E$. If $C$ is a circuit of $\mathfrak{M}_E$, then $S$ contains a broken circuit of $\mathfrak{M}_E$, a contradiction to $S\in \Omega_{i,v}$. If $C\sqcup\{v\}$ is a circuit of $\mathfrak{M}_E$, from the assumption on the circuit structure of $\mathfrak{M}_E$, we have $|C|=r$. It implies $|S|\ge r$, a contradiction to $S\in \Omega_{i,v}$ for $i\le r-1$.

\end{proof}

\end{document}